\theoremstyle{plain}
\newtheorem{thm}{Theorem}[section]
\theoremstyle{definition}
\newtheorem{Definition-Proposition}[thm]{Definition-Proposition}
\renewcommand{\tilde}{\widetilde}
\newcommand{\Q}{{\mathbb Q}}
\begin{document}
\title{On Log Canonical Inversion of Adjunction}
\author{Christopher D. Hacon}
\date{\today}
\address{Department of Mathematics \\
University of Utah\\
155 South 1400 East\\
JWB 233\\
Salt Lake City, UT 84112, USA}
\email{hacon@math.utah.edu}
\thanks{The author was partially supported by NSF research grant no: 0757897}
\maketitle
\begin{center}{\it To V. Shokurov on his sixtieth birthday}\end{center}
\begin{abstract} We prove a result on the inversion of adjunction for log canonical pairs that generalizes Kawakita's result to log canonical centers of arbitrary codimension.
\end{abstract}

The minimal model program is an ambitious program that aims to generalize to higher dimensional varieties many of the results in the classification of surfaces obtained by the Italian school of Algebraic Geometry in the early 20-th century. 
Log canonical singularities are the largest class of singularities for which the minimal model program is expected to hold. Let $(X,\Delta )$ be a {\it pair} consisting of a normal variety $X$ and an effective $\Q$-divisor $\Delta =\sum \delta _i\Delta _i$ such that $K_X+\Delta$ is $\Q$-Cartier. Consider a {\it log resolution} of $(X,\Delta )$ i.e. a proper birational morphism $\mu :Y\to X$ such that $Y$ is smooth, ${\rm Exc }(\mu)$ is a divisor and $\mu ^{-1}_* \Delta +{\rm Exc}(\mu )$ has simple normal crossings. Write $K_Y+\Delta _Y=\mu ^*(K_X+\Delta )$, then $\Delta _Y=\sum b_iB_i$ is uniquely determined and $(X,\Delta )$ is {\it log canonical} (resp. {\it klt}) if $b_i\leq 1$ (resp. $b_i<1$) for all $i$.
Similarly $(X,\Delta )$ is {\it plt} if $0\leq \delta _i\leq 1$ and for all log resolutions $\mu:Y\to X$ we have $b_i<1$ for all $i$ such that $B_i$ is $\mu$-exceptional.
Suppose that $\Delta =S+B$ where $S$ is a prime divisor and $\nu :S^\nu \to S$ is the normalization, then there is a uniquely defined $\Q$-divisor 
${\rm Diff}(B)$ on $S^\nu$ such that $(K_X+S+B)|_{S^\nu}=K_{S^\nu}+{\rm Diff}(B)$ (cf. Section 16 of \cite{Kollar93}). Note that if $S'$ is the strict transform of $S$ on $Y$ and $\bar \mu :S'\to S^\nu$ is the induced morphism, then ${\rm Diff}(B)=\bar \mu _* ((\Delta _Y-S')|_{S'})$.

It is of fundamental importance in the minimal model program to relate the singularities of the pair $(X,S+B)$ to those of the pair $(S^\nu , {\rm Diff}(B))$. If we know that the pair $(X,S+B)$ is log canonical (resp. plt), then it is easy to see that the pair   $(S^\nu , {\rm Diff}(B))$ is also log canonical (resp. klt). This process is known as adjunction.
Inverse of adjunction on the other hand is the process of deducing information about the singularities on the ambient variety $(X,S+B)$ from information on the singularities of the divisor $(S^\nu , {\rm Diff}(B))$.
These results are much more subtle and useful.
It is known that if $(S^\nu , {\rm Diff}(B))$ is klt, then $(X,S+B)$ is plt on a neighborhood of $S$ (cf. \cite[17.6]{Kollar93}) and by a more recent result of Kawakita, that if $(S^\nu , {\rm Diff}(B))$ is lc, then $(X,S+B)$ is lc on a neighborhood of $S$ (cf. \cite{Kawakita07}).
The proof of both results heavily relies on Kawamata-Viehweg vanishing. 

The purpose of this short note is to give a proof of a generalization of Kawakita's theorem on the inversion of adjunction to higher co-dimensional subvarieties. Our proof is based on the results of \cite{BCHM10} and recovers a new proof of Kawakita's theorem. Our argument closely follows ideas of Shokurov (cf. \cite{Shokurov93}), but avoids the use of the minimal model program for log canonical pairs. Note moreover that there are some similarities with this proof and the arguments of \cite{Kawakita07} and \cite{Kollar93}.
See also \cite{Schwede09} for a related result in characteristic $p>0$.

Before stating the main theorem we must introduce some notation.
Let $(X,\Delta )$ be a pair, then a subvariety $V\subset X$ is a {\it log canonical center} if there is a log resolution $\mu :Y\to X$ such that $K_Y+\Delta _Y=\mu ^*(K_X+\Delta _X)$ where $\Delta _Y=\sum b_iB_i$ and 
${\rm max}\{ b_i|\mu (B_i)=V\}=1$. Recall that in this case, $(X,\Delta )$ is log canonical on a neighborhood of the generic point of $V$ (cf. \cite[17.1.1]{Kollar93}). We will denote by $S$ a component $B_i$ as above, such that $b_i=1$ and $\mu (B_i)=V$. 
Let $\nu :W\to V$ be a birational morphism from a normal variety $W$,
$\Delta _S:=(\Delta _Y-S)|_S$,  and assume that $\bar \mu :S \to W$ is a morphism.
Then we define an effective $\Q$-divisor $B _{W}=\sum (1-t_i)P_i$ on $W$ as follows:
for any codimension $1$ point $P_i$ on $W$, let $$t_i={\rm sup}\{\tau |(S,\Delta _S+\tau \bar \mu ^*P_i){\rm \ is\ lc\ over\ }\eta _{P_i}\}$$ where $\eta _{P_i}$ denotes the generic point of $P_i$.  Note that the $t_i$ are  rational numbers (positive or negative).  
It is known that (cf. \cite{Ambro99}) \begin{enumerate}
\item The numbers $t_i$ are independent of the log resolution $\mu :Y\to X$ and of the choice of the divisor $S$.
\item If $W=V^\nu$ is the normalization of $V$, then $1-t_i\geq 0$ for all $P_i\in V^\nu$, and the strict inequality only holds for finitely many codimension $1$ points $P_i\in W$. 
\item If $S$ is the only component of $\Delta _Y$ of coefficient $1$, then $(W,B _{W })$ is klt. 
\item When $\dim V=\dim X-1$, we let $S$ be the strict transform of $V$ and we have $B _{V^\nu}={\rm Diff} (\Delta -V)$ where $V^\nu \to V$ is the normalization.
\item If $\eta : W'\to W$ is a birational morphism of normal varieties, then $\eta _* B _{W'}=B _W$ so that we have a {\bf b}-divisor ${\bf B}={\bf B}(V;X,\Delta)$ defined by ${\bf B}_W=B _W$ (see \cite{Corti07} for the definition and properties of {\bf b}-divisors).
\item If $S\to W$ satisfies the standard normal crossing assumptions of \cite[8.3.6]{Kollar07}, then  ${\bf B}$ descends to $W$ in the sense that for any birational morphism $\eta:W'\to W$, we have  $\eta ^*(K_W+{\mathbf B}_W)=K_{W'}+{\mathbf B}_{W'}$. 
\item If $(W, B _W)$ is sub-log canonical (i.e. if $t_i\geq 0$ for any $P_i\in W$) for any sufficiently high model (or equivalently for any model $W$ such that $S\to W$ satisfies the standard normal crossing assumptions) then we say that $(V^\nu, {\bf B})$ is log canonical. Note that by the Base Change Conjecture (cf. \cite[Conjecture 5]{Ambro99}) it is expected that $K_{V^\nu}+B_{V^\nu}$ is $\Q$-Cartier and  $ {\bf B}$ descends to $V^\nu$ in which case $(V^\nu, {\bf B})$ is log canonical if and only if $(V^\nu ,B _{V^\nu})$ is log canonical.
\end{enumerate}
We will prove the following generalization of Kawakita's result:

\begin{thm} Let $V$ be a log canonical center of a pair $(X,\Delta =\sum \delta _i\Delta _i)$ where $0\leq \delta _i \leq 1$. Then $(X,\Delta )$ is log canonical on a neighborhood of $V$ if and only if $(V^\nu,{\bf B}(V;X,\Delta))$ is log canonical.
\end{thm}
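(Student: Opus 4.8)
\medskip

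The plan is to prove the nontrivial direction: assuming $(V^\nu,{\bf B}(V;X,\Delta))$ is log canonical, deduce that $(X,\Delta)$ is log canonical near $V$. (The converse — adjunction — follows directly from the definition of $\mathbf B$, since any valuation computing the discrepancy over $V^\nu$ lifts to a valuation over $X$ with the same discrepancy after adding back the correction, and log canonicity near the generic point of $V$ is already guaranteed by the definition of a log canonical center.) First I would fix a log resolution $\mu\colon Y\to X$ with the component $S$ of $\Delta_Y$ of coefficient $1$ dominating $V$, chosen so that $\bar\mu\colon S\to W$ satisfies the standard normal crossing assumptions of \cite[8.3.6]{Kollar07}; by items (6) and (7) in the list, $\mathbf B$ descends to $W$ and the hypothesis says $(W,B_W)$ is sub-log canonical, i.e.\ $K_W+B_W=\mu^*_W(\text{something})$ with all coefficients $\le 1$. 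The goal is then to show every divisor $E$ over $X$ with center meeting $V$ has discrepancy $\ge -1$.

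\medskip

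The core of the argument, following Shokurov and the circle of ideas in \cite{BCHM10}, is to run an appropriate minimal model program to extract the ``worst'' divisor and then use a connectedness/vanishing argument on $S$. Concretely: suppose for contradiction $(X,\Delta)$ is not log canonical near $V$; then there is a divisor $E$ over $X$ with $a(E;X,\Delta)<-1$ and center contained in (the closure of) $V$, and after shrinking $X$ we may assume this holds in a neighborhood of $V$. Using \cite{BCHM10} I would extract a birational model $f\colon X'\to X$ on which $E$ (and only the non-log-canonical places, suitably) appear as divisors; writing $K_{X'}+\Delta'=f^*(K_X+\Delta)$, the divisor $\Delta'$ has a component $E$ with coefficient $>1$, and $S'$ (the transform of $S$, still of coefficient $1$) dominates $V$. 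Now restrict to $S'$: adjunction gives $(K_{X'}+\Delta')|_{S'^\nu}=K_{S'^\nu}+\Delta_{S'}$, and the key point is that the ``bad'' coefficient $>1$ on $E$ forces, via the connectedness lemma of Kollár–Shokurov applied to the fibers of $S'\to W$ over the center of $E$, a component of $\Delta_{S'}$ of coefficient $>1$ lying over a codimension one point $P_i$ of $W$ — but by the definition of $t_i$ and the descent of $\mathbf B$, the coefficient of that component pushes down to $1-t_i>1$, i.e.\ $t_i<0$, contradicting sub-log canonicity of $(W,B_W)$. Making the bookkeeping of ``which divisor over $W$ the bad place maps to, and with what coefficient'' precise is where Kawamata–Viehweg vanishing (through the connectedness lemma) enters, exactly as in the codimension one case of \cite{Kawakita07} and \cite[\S17]{Kollar93}.

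\medskip

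The main obstacle I anticipate is the passage from ``bad discrepancy over $X$'' to ``bad coefficient over a codimension one point of $W$'': in codimension one ($\dim V=\dim X-1$) this is essentially the classical connectedness lemma applied to $S'\to X$ over a point, but in higher codimension the fibers of $S'\to W$ need not be the fibers of a birational morphism, so one must work over the generic point of a suitable subvariety of $W$ and control how the log canonical center $S'$ and the bad divisor $E$ interact within $X'$. I would handle this by a careful choice of model (perfecting the MMP from \cite{BCHM10} so that $S'$ is disjoint from all but one relevant component, or so that $E$ meets $S'$ in the right way) together with a dévissage on $\dim V$: the base change / restriction formula in item (7) lets one replace $X$ by a neighborhood of the generic point of a codimension one center on $W$, lowering the effective codimension, so that the genuinely new content is isolated and reduced to the known divisorial case. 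A secondary technical point is ensuring the divisor one extracts via \cite{BCHM10} genuinely has center contained in $V$ and that shrinking $X$ does not destroy the log resolution hypotheses on $S\to W$; this is routine but must be done with some care about which open neighborhood of $V$ one works over throughout.
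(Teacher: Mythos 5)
Your overall architecture matches the paper's in outline: argue by contradiction, pass to a model where a divisorial log canonical place $S$ dominates $V$, and detect the failure of log canonicity of $(X,\Delta)$ as a component of coefficient $>1$ in the adjoint boundary on $S$ lying over a codimension one point $P_i$ of $W$, whence $t_i<0$ and $(W,B_W)$ is not sub-log canonical. That last step (the paper's case $S_i\cap\Sigma_i\neq\emptyset$) you have essentially right.

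The gap is the step you yourself flag as the ``main obstacle'': forcing the non-lc locus to actually meet $S$. Any connectedness statement applied on $S'\to W$ can only see divisors that already appear in $\Delta_{S'}$, i.e.\ it presupposes $E\cap S'\neq\emptyset$, which is precisely the point at issue; and the Koll\'ar--Shokurov connectedness lemma for $X'\to X$ only says the full non-klt locus of $(X',\Delta')$ is connected near each fiber, so $S'$ and the coefficient $>1$ part may be joined by a chain of other coefficient-one places without ever meeting --- this is exactly what makes Kawakita's theorem nontrivial already in codimension one. Your fallback, a d\'evissage on $\dim V$ reducing to the divisorial case, is not set up: there is no reason for a tower of intermediate lc centers to exist, and the divisorial case itself already requires the missing argument. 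What the paper actually does here is quantitative rather than inductive: it takes a dlt model, writes $\Delta_Y'=S+\Gamma$ and $\Sigma=\Delta_Y-S-\Gamma$, and runs the $(K_Y+S+\Gamma)$-MMP with scaling of an ample $H$ over $X$. Since $K_{Y_i}+S_i+\Gamma_i\sim_{\Q,X}-\Sigma_i$, the scaling makes $-\Sigma_i+\frac1m H_i$ nef over $X$ for suitable $i=i(m)$; if $S_i\cap\Sigma_i=\emptyset$ for all $i$, Kawamata--Viehweg vanishing gives surjectivity of $(\mu_i)_*\mathcal O_{Y_i}(H_i-m\Sigma_i)\to(\bar\mu_i)_*\mathcal O_{S_i}(H_i)$ for every $m\gg0$, while the source is contained in $\mathcal I_{\mu_{i_0}(\Sigma_{i_0})}\cdot(\mu_{i_0})_*\mathcal O_{Y_{i_0}}(H_{i_0})$ and $V\cap\mu_{i_0}(\Sigma_{i_0})\neq\emptyset$, a contradiction. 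Some device of this kind --- using the MMP to make $-\Sigma$ relatively nef and then letting $m\to\infty$ in a vanishing argument --- is the actual content of the theorem, and it is absent from your proposal.
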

\begin{proof} If $(X,\Delta )$ is log canonical on a neighborhood of $V$, then it is well known and easy to see that $(V^\nu,{\bf B}(V;X,\Delta))$ is log canonical. 
Therefore, we will assume that $(V^\nu,{\bf B}(V;X,\Delta))$ is log canonical and we will prove that $(X,\Delta )$ is log canonical on a neighborhood of $V$. 
Let $\mu :Y\to X$ be a dlt model (cf. \cite[3.1]{KK10}) of $(X,\Delta )$, so that if we write
$\mu ^*(K_X+\Delta)=K_Y+\Delta _Y$, then
\begin{enumerate}
\item $Y$ is $\Q$-factorial, 
\item $\Delta _Y=\sum b_iB_i\geq 0$, 
\item $(Y,\Delta _Y'=\sum _{b_i\leq 1}b_iB_i+\sum _{b_i>1}B_i)$ is dlt, and 
\item every exceptional divisor appears in $\Delta _Y'$ with coefficient $\geq 1$. \end{enumerate}
We may assume that $\Delta _Y'=S+\Gamma$ where $S$ is an irreducible component of $\Delta _Y^{=1}=\sum _{b_i= 1}B_i$ that dominates $V$ and $\Sigma =\Delta _Y-S-\Gamma$.
Note that $f(\Sigma )\not \supset V$. 
We now fix a sufficiently ample divisor $H$ on $Y$ and we run the $(K_Y+S+\Gamma)$-MMP with scaling of $H$ over $X$ (cf. \cite[3.10]{BCHM10}).
Let $\phi _i:Y_i\dasharrow Y_{i+1}$ be the induced sequence of flips and divisorial contractions and let $\mu _i :Y_i\to X$ be the induced morphisms.
For any divisor $G$ on $Y$ we let $G_i$ be its strict transform on $Y_i$.
Then there is a non-increasing sequence of rational numbers $s_i\geq s_{i+1}$ which is either \begin{itemize}\item finite with $s_{N+1}=0$, or 
\item infinite with $\lim s_i=0$\end{itemize} such that 
$K_{Y_i}+S_i+\Gamma _i+sH_i$ is nef over $X$ for all $s_i\geq s\geq s_{i+1}$.
For $i\geq i_0$, we may assume that each $\phi _i$ is a flip.
By a well known discrepancy computation, we may also assume that $S_i\dasharrow S_{i+1}$ is an isomorphism in codimension $1$ for all $i\geq i_0$ (cf. the arguments in the proofs of Step 1 and Step 2 of \cite[4.2.1]{Fujino07}).
For any $t>0$ there exists a $\Q$-divisor $\Theta _t$ on $Y$ such that $\Theta _t\sim _{\Q}\Gamma +tH$ and 
$(Y,S+\Theta _t )$ is plt. Note that if $t<s_i$, then $({Y_i}, S_i+\Theta _{t,i})$ is plt (this is because plt singularities are preserved by steps of the minimal model program). 

Suppose that for some $i\geq 0$, we have $S_i\cap \Sigma _i\ne \emptyset$, then $$(\mu _i^*(K_X+\Delta ))|_{S_i}=(K_{Y_i}+S_i+\Gamma _i+\Sigma _i)|_{S_i}=K_{S_i}+{\rm Diff}(\Gamma _i+\Sigma _i)$$ 
is not log canonical. Let $\bar \mu _i :S_i\to V^\nu$ be the induced morphism.
We may replace $S_i\to V^\nu$ by a birational model $\tilde \mu :\tilde S\to W$
satisfying the standard normal crossing assumptions.
If $g:\tilde S\to S_i$ is the induced morphism and we write $K_{\tilde S}+\Delta _{\tilde S}=g^*(K_{S_i}+{\rm Diff}(\Gamma _i+\Sigma _i))$, then there is a component of $\Delta _{\tilde S}$ of coefficient $>1$.
After possibly replacing $W$ by an appropriate birational model, we may assume that the image of this component is a codimension $1$ point $P_i\in W$. But then $t_i<0$ so that $1-t_i>1$ and hence  
$(W,B _{W})$ is not log canonical. This proves the theorem. 

Therefore, we will assume that $S_i\cap \Sigma _i= \emptyset$ for all $i\ge  0$, and we will derive a contradiction. Note that if this is the case, then any curve contained in $S_i$ intersects $\Sigma _i$ trivially and hence $\phi _i$ does not contract $S_i$.
For any $m\gg 0$ such that $m\Sigma$ is an integral divisor, let $i\gg 0$ be the integer such that
$s_i> \frac 1 m \geq s_{i+1}$.
Notice that since $$H_i-m\Sigma _i-S_i\sim _{\Q, X}K_{Y_i}+\Theta _{\frac 1 m ,i}+(m-1)(K_{Y_i}+S_i+\Gamma _i+\frac 1 m H_i)$$ where $( {Y_i},\Theta _{\frac 1 m ,i})$ is klt and $K_{Y_i}+S_i+\Gamma _i+\frac 1 m H_i$ is nef over $X$, then by Kawamata-Viehweg vanishing (cf. \cite[2.70]{KM98}), we have that
$R^1 (\mu _{i})_*\mathcal O _{Y_{i}}(H_{i}-m\Sigma_i -S_i)=0$ and hence
$$(\mu _{i})_*\mathcal O _{Y_{i}}(H_{i}-m\Sigma _i)\to ({\bar \mu }_{i})_*\mathcal O _{S_{i}}(H_{i})$$
is surjective.
On the other hand, for $m\gg 0$, the subsheaves
$$(\mu _{i})_*\mathcal O _{Y_{i}}(H_{i}-m\Sigma _i)=(\mu _{i_0})_*\mathcal O _{Y_{i_0}}(H_{i_0}-m\Sigma _{i_0})\subset (\mu _{i_0})_*\mathcal O _{Y_{i_0}}(H_{i_0})$$ 
are contained in $\mathcal I _{\mu _{i_0}(\Sigma _{i_0})}\cdot  (\mu _{i_0})_*\mathcal O _{Y_{i_0}}(H_{i_0})$. Since $V\cap \mu _{i_0}(\Sigma_{i_0})\ne \emptyset$ and $S_{i_0}\dasharrow S_i$ is an ismorphism in codimension $1$, the induced homomorphism
$$(\mu _{i})_*\mathcal O _{Y_{i}}(H_{i}-m\Sigma _i)\to ({\bar \mu }_{i_0})_*\mathcal O _{S_{i_0}}(H_{i_0})= ({\bar \mu }_{i})_*\mathcal O _{S_{i}}(H_{i})$$ is not surjective. This is the required contradiction.

\end{proof}


\begin{thebibliography}{ELMNPM}
\bibitem[Ambro99]{Ambro99}
F. Ambro, {\it The Adjunction Conjecture and its applications}. arXiv9903060v3
\bibitem[BCHM10]{BCHM10}
C. Birkar, P. Cascini, C. Hacon, and J. M$^c$Kernan, {\it Existence of
minimal models for varieties of log general type,} J. Amer.
Math. Soc. {\bf 23} (2010), no. 2, 405-468.

\bibitem[Corti07]{Corti07}
A. Corti, {\it 3-fold flips after Shokurov} in {\it Flips for 3-folds and 4-folds}. Oxford Lecture Series in Mathematics and its Applications {\bf 35}. Oxford University Press, Oxford (2007). 

\bibitem[Fujino07]{Fujino07}
O. Fujino, {\it Special termination and reducion to pl-flips} in {\it Flips for 3-folds and 4-folds}. Oxford Lecture Series in Mathematics and its Applications {\bf 35}. Oxford University Press, Oxford (2007). 

\bibitem[Kawakita07]{Kawakita07}
M. Kawakita, {\it Inversion of adjunction on log canonicity}. Invent. Math. {\bf 167} (2007), no. 1, 129--133.
\bibitem[Kawamata98]{Kawamata98}
Y. Kawamata,  {\it Subadjunction of log canonical divisors. II. } Amer. J. Math. {\bf 120} (1998), no. 5, 893--899. 

\bibitem[Koll\'ar et al.]{Kollar93}
J. Koll\'ar et al., {\it Flips and abundance for algebraic threefolds,} Soci\'et\'e Math\'ematique de France, Paris, 1992, Papers from the Second Summer Seminar on Algebraic Geometry held at the University of Utah, Salt Lake City, Utah, August 1991, Ast\'erisque No. 211 (1992).
\bibitem[KM98]{KM98}
J. Koll\'ar, S. Mori,  {\it Birational geometry of algebraic varieties.} With the collaboration of C. H. Clemens and A. Corti. Translated from the 1998 Japanese original.  Cambridge Tracts in Mathematics, {\bf 134}. Cambridge University Press, Cambridge, 1998.
\bibitem[Koll\'ar07]{Kollar07}
J. Koll\'ar, {\it Kodaira's canonical bundle formula and adjunction.  Flips for 3-folds and 4-folds,  }
134--162, Oxford Lecture Ser. Math. Appl., 35, Oxford Univ. Press, Oxford, 2007

\bibitem[KK10]{KK10}
J. Koll\'ar,  S. Kov\'acs, {\it Log canonical singularities are Du Bois.}
 J. Amer. Math. Soc. {\bf 23} (2010), no. 3, 791-813.
\bibitem[Schwede09]{Schwede09}
K. Schwede, {\it F-adjunction.} Algebra Number Theory 3 (2009), no. 8, 907–950. 

\bibitem[Shokurov93]{Shokurov93}
V. V. Shokurov, {\it  Semi-stable $3$3-fold flips.} Izv. Ross. Akad. Nauk Ser. Mat. 57 (1993), no. 2, 165--222.
\end{thebibliography}
 \end{document}